\numberwithin{equation}{section}
\theoremstyle{plain}
\newtheorem{theorem}{Theorem}[section]
\newtheorem{lemma}[theorem]{Lemma}
\theoremstyle{definition}
\newtheorem{case[theorem]}{Case}
\theoremstyle{remark}
\newtheorem{remark}[theorem]{Remark}
\numberwithin{equation}{section}
\begin{document}

\title{Pinned Distances in Modules over Finite Valuation Rings}

\author{Esen Aksoy Yazici}

\maketitle


\begin{abstract}
  Let $R$ be a finite valuation ring of order $q^r$ where $q$ is odd and $A$ be a subset of $R$. In the present paper, we prove that there exists a point $u$ in the Cartesian product set $A\times A\subset R^2$ such that the size of the  pinned distance set at $u$ satisfies
$$|\Delta_u(A\times A)|\gg \min\left\{q^r, \frac{|A|^3}{q^{2r-1}}\right\}.$$ 
This implies that if $|A|\ge q^{r-\frac{1}{3}}$, then the set $A\times A$ determines a positive proportion of all possible distances.

\end{abstract}




\section{Introduction} 
\vskip.125in
Erd\H{o}s-Falconer type problems in discrete geometry ask for a threshold on the size of a set so that the set determines the given geometric configurations. These problems have been studied by many authors both in continuous and discrete setting.

In \cite{E46}, Erd\H{o}s observed that  $\sqrt{n}\times \sqrt{n}$ integer grid determines $C( \frac{n}{\sqrt{ log n}})$ distances and he conjectured that the minimum number of distances determined by a  $n$-point set in the plane is indeed $C (\frac{n}{\sqrt{ log n}})$, where $C$ is an absolute constant. Despite many works and progress, the Erd\H{o}s distance problem was open until recently. In 2010, Guth and Katz \cite{GK} employed a polynomial partitioning technique based on  Elekes-Sharir framework to prove that  $n$ points in the plane determine at least $C\frac{n}{log n}$ distances.  This result solved the Erd\H{o}s distance problem  up to a  $\sqrt{\log}$ factor.

The distance problem in finite field plane was first studied by  Bourgain, Katz and Tao in \cite{BKT}. The result in \cite{BKT} was later generalized  by various authors to higher dimensional vector spaces. It was also extended to many other geometric configurations in finite field geometry, see for instance \cite{BHIPR13, BIP, BI, CEHIK12, CHISU, HIKR11, IR07, IRZ12, P} and references therein. In particular, in \cite{P}, Petridis proved the following pinned distance result for Cartesian product subsets of vector spaces over prime fields. 

\vskip.125in
\begin{theorem}\emph{\cite[Theorem 1.1]{P}} Let $p$ be an odd prime and  $A\subset \mathbb{F}_p$. There exist $a,b\in A$ such that
$$|\Delta_{(a,b)}(A\times A)|=\Omega(\min\{p, |A|^{3/2}\}).$$
\end{theorem}

Similar geometric problems in modules over finite cyclic rings  were studied by Covert, Iosevich and Pakianathan in \cite {CIP}. Using a Fourier analytic approach, the authors of  \cite {CIP} proved the following.

\begin{theorem}\emph{\cite [Theorem 1.3]{CIP}} Let $E\subset \mathbb{Z}_q^{d}$, where $q=p^l$. Suppose $$|E|\gg l(l+1)q^{\frac{(2l-1)}{2l}d+ \frac{1}{2l}}.$$ Then the distance set $\Delta(E)$ determined by the points of $E$ satisfies 

 $$ \Delta(E) \supset \mathbb{Z}_q^{*}\cup \{0\}.$$

\end{theorem}

Later in \cite{HV}, Hieu and Vinh proved the following distance result  in the context of finite cyclic rings.
\begin{theorem}\emph{\cite[Theorem 2.7]{HV}} Let $A\subset \mathbb{Z}_q$ be of cardinality $|A|\gg q^{1-\frac{1}{2r}}$. Then, the size of  the distance set determined by $A^n$ satisfies

$$|\Delta_{\mathbb{Z}_q}(A^n)|\gg \min\left\{ q, \frac{|A|^{2n-1}}{ (rq^{2-\frac{1}{r}})^{n-1}}   \right\}.$$

\end{theorem}

Now, let $R$ denote a finite valuation ring. In this paper, we study a variant of distance problem, namely pinned distance problem,  for Cartesian product subsets $A\times A$ of $R^2$. 

Note that, the method we use to prove the main result of this paper is analogous to the one given by Petridis in \cite[Theorem 1.1]{P}. More precisely, we first see pinned distances at a fixed point in $R^2$ as a point-plane incidence in $R^3$. Then we employ the point-plane incidence bound for multisets in $R^3$ which is recently given by Van The et al.  in \cite[Theorem 2.3]{VTHV}.  This yields  the lower bound for the size of the specified pinned distance set in $R^2$ in Theorem \ref{pinned}. 

We should mention that the distance result we obtain in  Theorem \ref{pinned} recovers  the pinned distance result in   \cite{P} in the finite filed setting.  Also, in the setting of  modules over finite cyclic rings $\mathbb{Z}_q$, it is an improvement on the distance results given in \cite {CIP, HV} for  the Cartesian product sets $A\times A \subset \mathbb{Z}_q^2.$


\vskip.125in

 Before stating the main theorem,  let us recall some necessary definitions.

\vskip.125in
\subsection {Notation.} 
\vskip.125in

We note that a detailed definition of  finite valuation ring can be found in \cite{N}. In order to make the statements precise and self contained, we will review the definition and provide some key examples in this note. A finite valuation ring is a finite principal ideal domain which is local. Given a finite valuation ring $R$, we associate  two parameters $q$ and $r$ to $R$ as follows. Let the maximal ideal $M$ of $R$ to be of the form $M=(\pi)$, where $\pi$ is the uniformizer of $R$, i.e. a non unit defined up to a unit of $R$.  Let $r$ be the nilpotency degree of $\pi$, that is the smallest positive integer with the property that $\pi^r=0$ and $q$ be the size of the residue field $F=R/(\pi)$. Therefore, $R$ has the filtration
$$R\supset(\pi)\supset (\pi^2)\dots\supset (\pi^r)=0,$$
where $|R|=q^r$.  Some examples of finite valuation rings are as follows.
\begin{enumerate}
\item Finite fields $\mathbb{F}_q$, where $q=p^k$ is a prime power. 
\item Finite cyclic rings $\mathbb{Z}_{p^k}$, where $p$ is a prime.
\item Function fields $F[x]/(f^k)$, where $F$ is a finite field and $f$ is an irreducible polynomial in $F[x]$. 
\item $\mathcal{O}/(p^k)$, where $\mathcal{O}$ is the ring of integers in a number field and $p$ is a prime in $\mathcal{O}$.
\end{enumerate}

Let us also write some of the examples above with parameters $q$ and $r$ as stated in the definition. Note that  for the finite field $R=\mathbb{F}_{p^k}$, $p$ is a prime, we have $q=p^k$ and $r=1$. And for the finite cyclic ring $\mathbb{Z}_{p^k}$ we have the filtration
$$\mathbb{Z}_{p^k}\supset (p)\supset (p^2)\supset\dots\supset (p^k)=0.$$
Hence $r=k$ and $q=|\mathbb{Z}_{p^k}/(p)|=p$ in this case.

Next we recall the notion of distance in this context. For two points $u=(u_1,\dots,u_d)$ and $v=(v_1, \dots v_d)$ in $R^d$, the distance between them is given by
$$||u-v||=(u_1-v_1)^2+\dots+(u_d-v_d)^2.$$

For a subset $E\subset R^d$, the distance set determined by $E$ is 
$$\Delta(E)=\{||u-v||: u,v \in E\},$$
and the distance set pinned at a fixed point $u$ of $E$  is defined by
$$\Delta_u(E)=\{||u-v||: v\in E\}.$$

Throughout $R$ will denote a finite valuation of order $q^r$, where $q$ is an odd prime power. $X\gg Y$ means that there exists an absolute constant $c$ such that $X\ge cY$, and $"\ll"$ is defined similarly.

\vskip.125in
\subsection{Statement of Main Result}
\vskip.125in

Our main result is the following theorem.
\vskip.125in
\begin{theorem}\label{pinned}
Let $R$ be a finite valuation ring of order $q^r$, q is an odd prime power, and $A\subset R$. There exists  a point $u\in A\times A\subset R^2$ such that 
$$|\Delta_u(A\times A)|\gg \min\left\{q^r, \frac{|A|^3}{q^{2r-1}}\right\} .$$  In particular, if $|A|\ge q^{r-\frac{1}{3}}$, then $\Delta_u(A\times A)\gg q^r$ for some $u\in A\times A$ and hence $A\times A$ determines a positive proportion of all possible distances.
\end{theorem}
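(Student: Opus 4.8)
The plan is to carry out the Cauchy--Schwarz plus point--plane incidence strategy of Petridis \cite{P} over the ring $R$. Write $E=A\times A\subset R^2$ and, for $u\in E$ and $t\in R$, set $N(u,t)=|\{v\in E:||u-v||=t\}|$. Since $\sum_{t\in\Delta_u(E)}N(u,t)=|E|=|A|^2$, the Cauchy--Schwarz inequality gives
$$|\Delta_u(E)|\ \ge\ \frac{|A|^4}{\sum_{t\in R}N(u,t)^2}.$$
Averaging the denominator over $u\in E$ and using that $\sum_{u\in E}\sum_{t\in R}N(u,t)^2=Q$, where
$$Q:=\big|\{(u,v,v')\in E^3:||u-v||=||u-v'||\}\big|,$$
we find by pigeonhole a pin $u\in E$ with $|\Delta_u(E)|\ge |A|^6/Q$. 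Hence it suffices to establish the upper bound $Q\ll |A|^6/q^r+q^{2r-1}|A|^3$.

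To bound $Q$, I would rewrite the defining relation. Expanding $||u-v||=||u-v'||$ and using that $2$ is a unit in $R$ (since $q$ is odd), the condition becomes the equation $2u_1(v_1-v_1')+2u_2(v_2-v_2')=(v_1^2+v_2^2)-((v_1')^2+(v_2')^2)$, which is linear in $u=(u_1,u_2)$. I then lift each $v=(v_1,v_2)\in E$ to the point $P_v=(v_1,v_2,v_1^2+v_2^2)\in R^3$, and associate to each pair $(u,v')\in E^2$ the plane $\Pi_{u,v'}=\{x\in R^3:(2u_1,2u_2,-1)\cdot x=2(u_1v_1'+u_2v_2')-((v_1')^2+(v_2')^2)\}$; since the last coordinate of the normal is a unit, the defining functional is surjective onto $R$ and $\Pi_{u,v'}$ is a genuine plane. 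Then $||u-v||=||u-v'||$ holds exactly when $P_v\in\Pi_{u,v'}$, so $Q=I(\mathcal P,\mathcal H)$ is the number of incidences between the point set $\mathcal P=\{P_v:v\in E\}$, of size $|A|^2$, and the plane multiset $\mathcal H=\{\Pi_{u,v'}:(u,v')\in E^2\}$, of size at most $|A|^4$. Feeding these into the point--plane incidence bound of \cite{A}, whose bound has the shape $I(\mathcal P,\mathcal H)\ll |\mathcal P||\mathcal H|/q^r+q^{2r-1}(|\mathcal P||\mathcal H|)^{1/2}$ away from the degenerate configurations discussed below, yields precisely $Q\ll |A|^6/q^r+q^{2r-1}|A|^3$.

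Combining the two halves, the chosen pin satisfies
$$|\Delta_u(E)|\ \ge\ \frac{|A|^6}{Q}\ \gg\ \Big(\frac{1}{q^r}+\frac{q^{2r-1}}{|A|^3}\Big)^{-1}\ \asymp\ \min\Big\{q^r,\frac{|A|^3}{q^{2r-1}}\Big\},$$
which is the first assertion. For the final statement, if $|A|\ge q^{r-1/3}$ then $|A|^3/q^{2r-1}\ge q^{3r-1}/q^{2r-1}=q^r$, so $\min\{q^r,|A|^3/q^{2r-1}\}\gg q^r$; since every distance lies in $R$ and $|R|=q^r$, the Cartesian product $A\times A$ realizes a positive proportion of all $q^r$ possible distances at the pin $u$.

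The main obstacle is checking the genericity hypotheses of the incidence bound of \cite{A} in the present setting, that is, controlling the lines in $R^3$ incident to many points of $\mathcal P$. Over a field this is harmless — a line meets the paraboloid $z=x^2+y^2$ in at most two points, except for the two ``null'' generators through a given point when $-1$ is a square — but over a finite valuation ring the null cone $\{w\in R^2:w_1^2+w_2^2=0\}$ can split off full affine lines and the presence of zero divisors enlarges the relevant intersection patterns, so a line on the paraboloid can carry as many as $|A|$ points of $\mathcal P$. I would handle this by isolating the triples $(u,v,v')$ with $||u-v||=||u-v'||=0$ (and, more generally, those supported on these exceptional lines): for each $u$ the set $\{v\in E:||u-v||=0\}$ meets $E$ in $O(|A|)$ points, so such triples number $O(|A|^4)$, which is absorbed into the error term because $|A|\le q^r\le q^{2r-1}$. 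The remaining configurations satisfy the conditions under which the bound of \cite{A} applies, and the estimate for $Q$ goes through.
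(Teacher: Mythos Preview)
Your overall strategy---Cauchy--Schwarz to reduce to counting isosceles triples $Q$, then a point--plane incidence bound in $R^3$---is exactly the paper's, and the estimate $Q\ll |A|^6/q^r+q^{2r-1}|A|^3$ you obtain is the same. The only difference is the incidence encoding. You lift $v\in E$ to the paraboloid and pair it against planes indexed by $(u,v')\in E^2$, so $|\mathcal P|=|A|^2$ and $|\mathcal H|\le |A|^4$. The paper instead follows Petridis's variable split, rewriting the equation as
\[
(2u_1,\,v_2-w_2,\,w_2^2-v_2^2)\cdot(v_1-w_1,\,2u_2,\,1)=v_1^2-w_1^2,
\]
so that both the point set and the plane set are parametrised by $A^3$. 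Either way $|\mathcal P|\,|\mathcal H|\le |A|^6$, and Theorem~\ref{incidence} gives the identical bound. (Both encodings in fact involve multisets; this is harmless because the spectral proof behind Theorem~\ref{incidence} extends verbatim to weighted collections.)

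Your final paragraph, however, is unnecessary and reflects a misreading of the incidence input. Theorem~\ref{incidence} is the algebraic, expander--mixing--lemma type bound $|I|\le |\mathcal P||\mathcal H|/q^r+q^{2r-1}\sqrt{|\mathcal P||\mathcal H|}$, stated with \emph{no} genericity hypothesis whatsoever; it is not a Rudnev-style bound requiring control on collinear points. Consequently there is no ``main obstacle'': the rich-line analysis on the paraboloid, the discussion of the null cone over $R$, and the separate treatment of triples with $\|u-v\|=\|u-v'\|=0$ can all be deleted. Incidentally, your auxiliary claim that for each $u$ the zero circle meets $A\times A$ in $O(|A|)$ points is not obvious over a general finite valuation ring (the locus $\{w:w_1^2+w_2^2=0\}$ need not be a union of $O(1)$ lines when $\pi\mid 2$ fails but zero divisors are present), so it is fortunate that it is not needed.
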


 \vskip.125in
 
 \begin{remark} Let $R=\mathbb{F}_p$, where $p$ is an odd prime.   Note that in this case we can take $q=p$ and $r=1$  in Theorem \ref{pinned}  and conclude that if $A\subset \mathbb{F}_p$, then there exists $u\in A\times A\subset \mathbb{F}_p^2$ such that
$$|\Delta_u(A\times A)|\gg \min\left\{p, \frac{|A|^3}{p}\right\}.$$
In particular, if $|A|>p^{\frac{2}{3}},$ then $|\Delta_u(A\times A)|\gg p$ for some $u\in A\times A$. This result matches with the result of Petridis given in \cite[Theorem 1.1]{P} in the context of prime fields and generalize it to the broader context of finite valuation rings.
\end{remark}

\vskip.125in
\begin{remark}
We note that the result in \cite [Theorem 1.3]{CIP} in the special case $d=2$ implies that if $E\subset \mathbb{Z}_q^2$, where $q=p^l$, and
$|E|\gg l(l+1)q^{2-\frac{1}{2l}}$, then 
$$\Delta(E)\supset \mathbb{Z}_q^{*}.$$
On the other hand, Theorem \ref{pinned} implies that if $E= A\times A\subset \mathbb{Z}_q^2$, where $q=p^l$, and $|E|=|A\times A|\ge q^{2-\frac{2}{3l}}$, then $|\Delta(E)|= |\Delta(A\times A)|\gg q.$

Therefore, in terms of getting a positive proportion of all possible distances,  Theorem \ref{pinned} improves the result in \cite[Theorem 1.3]{CIP} for  Cartesian product  sets of the form $A\times A\subset \mathbb{Z}_q^2$.
 \end{remark}

\vskip.125in

\begin{remark}
In \cite[Theorem 2.7]{HV}, for $n=2$, the following result was obtained for subsets of finite cyclic rings. Let $A\subset \mathbb{Z}_q$ be of cardinality $|A|\gtrsim q^{1-1/2r},$ where $q=p^r$. Then the number of distances determined by $A\times A$ satisfies

$$|\Delta_{\mathbb{Z}_q}(A\times A)| \gtrsim \min \left\{ q, \frac{|A|^{3}}{rq^{2-1/r}} \right\} $$

Theorem \ref{pinned} can be seen as a generalization of this result to finite valuation rings and a slight improvement in the context of finite cyclic rings.
\end{remark}

\vskip.125in

\section{Proof of Theorem \ref{pinned}} 
\vskip.125in

For the proof of Theorem \ref{pinned}, we will need the following lemma from \cite{P}. We note here that  though Petridis stated Lemma \ref{average} for subsets of finite fields $\mathbb{F}_q$,  it can be readily checked that the same proof applies for subsets of any finite valuation ring.

\vskip.125in
\begin{lemma}\label{average} Let $E\subset R^2$ and $N$ be the number of solutions to 
\begin{eqnarray}\label{dot}
2u\cdot(v-w)+||w||-||v||=0,
\end{eqnarray}
where $u, v,w \in E$. Then there exists $u\in E$ such that $|\Delta_u(E)|\ge \frac{|E|^3}{N}.$
\end{lemma}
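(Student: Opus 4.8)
The plan is to prove Lemma~\ref{average} by a second-moment (Cauchy--Schwarz) estimate for a fixed pin $u$, followed by an averaging step over $u\in E$. As the paper remarks, nothing in this argument is sensitive to $R$ being a field rather than a general finite valuation ring, so I would run the standard argument verbatim.

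First I would fix $u\in E$ and introduce the level-set counting function $\nu_u(t)=\#\{v\in E:\ \|u-v\|=t\}$ for $t\in R$, so that $\sum_{t\in R}\nu_u(t)=|E|$ and $\nu_u$ is supported on exactly $\Delta_u(E)$, a set of size $|\Delta_u(E)|$. Cauchy--Schwarz over this support yields
$$|E|^2=\Bigl(\sum_{t\in\Delta_u(E)}\nu_u(t)\Bigr)^{2}\le |\Delta_u(E)|\sum_{t\in R}\nu_u(t)^{2}.$$
Expanding the square, $\sum_{t}\nu_u(t)^{2}$ is the number of pairs $(v,w)\in E\times E$ with $\|u-v\|=\|u-w\|$, and summing over $u\in E$ gives
$$\sum_{u\in E}\sum_{t\in R}\nu_u(t)^{2}=\#\{(u,v,w)\in E^{3}:\ \|u-v\|=\|u-w\|\}.$$
The only algebra needed is the formal identity $\|u-w\|-\|u-v\|=2u\cdot(v-w)+\|w\|-\|v\|$, valid in any commutative ring since $\|x\|=x\cdot x$ is a quadratic form; hence $\|u-v\|=\|u-w\|$ is precisely equation~\eqref{dot}, and the triple count above equals $N$.

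To finish, I would pigeonhole: there is some $u\in E$ (we may assume $E\neq\emptyset$, so $N\ge |E|\ge 1$) for which $\sum_{t}\nu_u(t)^{2}\le N/|E|$. For this $u$, the Cauchy--Schwarz bound gives $|E|^{2}\le |\Delta_u(E)|\cdot N/|E|$, that is $|\Delta_u(E)|\ge |E|^{3}/N$, which is the claim.

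I do not anticipate any real obstacle here. The only points to keep straight are the bookkeeping --- that $\nu_u$ is supported on exactly $\Delta_u(E)$, and that the double sum over $(t,u)$ reorganizes into the triple count defining $N$ --- together with the observation that the hypothesis ``$q$ odd'' (equivalently, $2$ a unit) is not even used in this lemma, since the quadratic-form identity is purely formal; that hypothesis enters only later, when the point--plane incidence bound from \cite{A} is applied.
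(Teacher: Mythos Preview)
Your proof is correct and is exactly the standard second-moment/Cauchy--Schwarz argument from Petridis~\cite{P} that the paper cites; the paper does not give an independent proof but simply observes that Petridis's argument carries over verbatim to finite valuation rings, which is precisely what you have written out.
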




We will also use the following point-plane incidence bound in $R^3$ from \cite{VTHV}.

\begin{theorem} \emph{\cite[Theorem 2.3]{VTHV}}\label{incidence}
Let $Q, \Pi$ be weighted set of points and planes in $R^3$ with the weighted integer function $w$, both total weight $W$. Suppose that maximum weights are bounded by $w_{0}\ge 1$
Let the number of weighted incidences be 
$$ I_{w}= \sum _{q\in Q,\; \pi\in \Pi} w(q)w(\pi)\delta_{q\pi},$$
where 
\begin{equation*}
 \delta_{q\pi}  = \left\{
    \begin{array}{rl}
      1& \text{if } q\in\pi,\\
      0 & \text{if } q\notin \pi.
    \end{array} \right.
\end{equation*}
 Then the number $I_{w}$ of weighted incidences is bounded as follows:
$$ I_{w}=\sum_{q\in \pi} w(q)w(\pi)\ll \frac{1}{q^r} W^2+q^{2r-1} W.$$ 
 \end{theorem}

\vskip.125in
\begin{proof}[Proof of Theorem \ref{pinned}]
We first note that if we write $u=(u_1, u_2)$, $v=(v_1,v_2)$ and $w=(w_1, w_2)$, where $u_i, v_i, w_i\in A$, then the equation (\ref{dot}) can be written as 
\begin{eqnarray*}
2u_{1}(v_{1}-w_{1})+2u_{2}(v_{2}-w_{2})+(w_{2}^2-v_{2}^2)=v_{1}^2-w_{1}^2
\end{eqnarray*}
which can be restated as 
\begin{eqnarray}\label{dot1}
(2u_1,v_2-w_2, w_2^2-v_2^2)\cdot(v_1-w_1,2u_2,1)=v_1^2-w_1^2.
\end{eqnarray}
Next we define a set of points $Q$ and a set of planes $\Pi$ in $R^3$ as follows:
$$Q=\{(2u_1,v_2-w_2,w_2^2-v_2^2):u_1,v_2,w_2\in A\}$$
and
$$\Pi=\{\{x\in R^3: x\cdot (v_1-w_1,2u_2,1)=v_1^2-w_1^2\}: v_1, w_1, u_2\in A\}$$

Then it follows that the number of incidences $\left|I(Q,\Pi)\right|$ between $Q$ and $\Pi$  is equal to the number of solutions of the equation (\ref{dot1}) which is $N$ in Lemma \ref{average}. 

Note that  the total weight  of $Q$ and $\Pi$ are both  $W=|A|^3$. Hence, Theorem \ref{incidence} implies that
\begin{eqnarray*}
N&=&\left|I(Q,\Pi)\right|\\
&\le& \frac{1}{q^r}W^2+ q^{2r-1}W\\
&=& \frac{1}{q^r}|A|^6+q^{2r-1}|A|^3.
\end{eqnarray*}
Therefore, by Lemma \ref{average}, there exists $u\in A\times A$ such that 
\begin{eqnarray*}
|\Delta_u(A\times A)|\ge \frac{|A|^6}{N}\gg\min\left\{q^r, \frac{|A|^3}{q^{2r-1}}\right\} 
\end{eqnarray*}
which completes the proof of Theorem \ref{pinned}.
\end{proof}

\vskip.125in
\noindent \textbf{Acknowledgments.} The author  would like to thank Brendan Murphy for valuable comments.
\vskip.125in


 \end{document}